\numberwithin{equation}{section}
\def\rank{\mbox{\rm rank}\,}
\def\det{\mbox{\rm det}\,}\def\diag{\mbox{\rm diag}\,}
\theoremstyle{plain} 
\newtheorem{theorem}{\bf Theorem}[section]
\newtheorem{lemma}[theorem]{\bf Lemma}
\newtheorem{corollary}[theorem]{\bf Corollary}
\newtheorem{proposition}[theorem]{\bf Proposition}
\theoremstyle{definition} 
\newtheorem{definition}[theorem]{\bf Definition}
\newtheorem{remark}[theorem]{\bf Remark}
\newtheorem{example}[theorem]{\bf Example}
\newcommand{\bt}{\begin{theorem}}
\newcommand{\et}{\end{theorem}}
\newcommand{\bl}{\begin{lemma}}
\newcommand{\el}{\end{lemma}}
\newcommand{\bd}{\begin{definition}}
\newcommand{\ed}{\end{definition}}
\newcommand{\bc}{\begin{corollary}}
\newcommand{\ec}{\end{corollary}}
\newcommand{\bp}{\begin{proof}}
\newcommand{\ep}{\end{proof}}
\newcommand{\bx}{\begin{example}}
\newcommand{\ex}{\end{example}}
\newcommand{\br}{\begin{remark}}
\newcommand{\er}{\end{remark}}
\newcommand{\be}{\begin{equation}}
\newcommand{\ee}{\end{equation}}
\newcommand{\ba}{\begin{align}}
\newcommand{\ea}{\end{align}}
\newcommand{\bn}{\begin{enumerate}}
\newcommand{\en}{\end{enumerate}}
\newcommand{\bcs}{\begin{cases}}
\newcommand{\ecs}{\end{cases}}
\renewcommand{\section}{\@startsection{section}{1}{0mm}
  {-\baselineskip}{0.5\baselineskip}{\bf\leftline}}
\begin{document}

\title[The smith normal form of the walk matrix of the Dynkin graph $A_n$]{The smith normal form of the walk matrix \\of the Dynkin graph $A_n$} 

\author{Liangwei Huang, Yan Xu and Haicheng Zhang$^*$}
\address{Institute of Mathematics, School of Mathematical Sciences, Nanjing Normal University,
 Nanjing 210023, P. R. China.\endgraf}
\email{2524416777@qq.com (Huang); swgfeng@outlook.com (Xu); zhanghc@njnu.edu.cn (Zhang).}

\thanks{$\ast$: Corresponding author.}
\subjclass[2010]{ 
05C50, 15B36.}
%
\keywords{ 
Smith normal form; Walk matrix; Dynkin graph; Equitable partition.
}



\begin{abstract}
In this paper, we give the rank of the walk matrix of the Dynkin graph $A_n$, and prove that its Smith normal form is $$\diag(\underbrace{1,\ldots,1}_{\lceil\frac{n}{2}\rceil},0,\ldots,0).$$
\end{abstract}

\maketitle

\section{Introduction}
Let $M$ be an $n\times n$ integral matrix. For each $1\leq k\leq n$, the $k$-th determinant divisor of $M$, denoted by $D(k)$, is the greatest common divisor of all $k\times k$ minors of $M$. Suppose that $\rank M=r$, then $D(k)\neq 0$ for $1\leq k\leq r$, and they satisfy $D(k-1)~|~D(k)$ for $k=1,\ldots,r$, where $D(0):=1$. The {invariant factors} of $M$ are defined by
$$d_1=\frac{D(1)}{D(0)}, d_2=\frac{D(2)}{D(1)},\ldots, d_r=\frac{D(r)}{D(r-1)}.$$
It is well known that each $d_i$ divides $d_{i+1}$ for $i=1,\ldots,r-1$. Then the {\em Smith normal form} of $M$ is the diagonal matrix
$$\diag(d_1,\ldots,d_r,0,\ldots,0).$$

Given an $n\times n$ integral matrix $M$, the {\em walk matrix} $W(M)$ of $M$ is defined by
$$W(M)=\left[{e_n}\quad M{e_n}\quad \cdots\quad {M^{n - 1}}{e_n}\right],$$
where $e_n$ is the all-ones vector of dimension $n$. Let $G$ be a simple graph with $n$ vertices and $A$ be the adjacency matrix of $G$. The walk matrix of $A$ is also called the walk matrix of $G$, denoted by $W(G)$. Then the $(i,j)$-entry of the walk matrix $W(G)$ counts the number of the walks in $G$ of length $j-1$ starting from the vertex $i$.

Dynkin graphs and extended Dynkin graphs are widely applied in the study of the classifications of simple Lie algebras in Lie theory (cf. \cite{Cartan,Hum}), the classifications of representation-finite hereditary algebras in representation theory of algebras (cf. \cite{Ass,Deng}), and spectral theory (cf. \cite{Dok}). The Smith normal form of the walk matrix of the Dynkin graph $D_n$ was given in \cite{ch1,Wang}. Recently, S. Moon and S. Park \cite{Moon} have provided formulas for the rank and the Smith normal form of the walk matrix of the extended Dynkin graph $\widetilde{D}_n$.

In this paper, let $n$ be a fixed positive integer, we consider the following Dynkin graph $A_n$ with $n$ vertices
\begin{equation*}\includegraphics[width=0.5\textwidth]{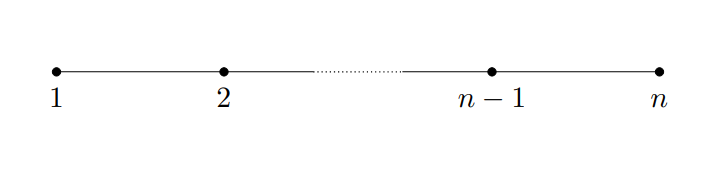}\end{equation*} and give the Smith normal form of its walk matrix.

\section{Preliminaries}
Unless otherwise specified, we set $r=\lceil\frac{n}{2}\rceil$. Let $\overline{W(A_n)}$ be the $r \times r$ matrix obtained from the walk matrix $W(A_n)$ by deleting
\begin{enumerate}
\item[(1)]~the $(r+1)$-th to the $n$-th rows of $W(A_n)$, and
\item[(2)]~the $(r+1)$-th to the $n$-th columns of $W(A_n)$.
\end{enumerate}

Let us give two examples of $W(A_n)$ and $\overline{W(A_n)}$ for $n=3,10$.

\begin{example}
The walk matrix $W({A_3})$ is
$$ \begin{bmatrix}
1&1&2\\
1&2&2\\
1&1&2
\end{bmatrix}$$
and
$$\overline {W({A_3})}  = \begin{bmatrix}
1&1\\
1&2
\end{bmatrix}.$$

The walk matrix $W(A_{10})$ is
$$W({A_{10}}) =  \begin{bmatrix}
1&1&2&3&6&{10}&{20}&{35}&{70}&{126}\\
1&2&3&6&{10}&{20}&{35}&{70}&{126}&{251}\\
1&2&4&7&{14}&{25}&{50}&{91}&{181}&{334}\\
1&2&4&8&{15}&{30}&{56}&{111}&{208}&{409}\\
1&2&4&8&{16}&{31}&{61}&{117}&{228}&{436}\\
1&2&4&8&{16}&{31}&{61}&{117}&{228}&{436}\\
1&2&4&8&{15}&{30}&{56}&{111}&{208}&{409}\\
1&2&4&7&{14}&{25}&{50}&{91}&{181}&{334}\\
1&2&3&6&{10}&{20}&{35}&{70}&{126}&{251}\\
1&1&2&3&6&{10}&{20}&{35}&{70}&{126}
\end{bmatrix}$$
and
$$\overline {W({A_{10}})}= \begin{bmatrix}
1&1&2&3&6\\
1&2&3&6&{10}\\
1&2&4&7&{14}\\
1&2&4&8&{15}\\
1&2&4&8&{16}
\end{bmatrix}.$$
\end{example}

Throughout this paper we freely use the terminology and notation used in \cite{lon}.
We consider two partitions for the vertex set of the Dynkin graph $A_n$. The following partitions with $r$ cells
$$\Pi_1=\{\{1,n\},\{2,n-1\},\ldots,\{\frac{n}{2},\frac{n}{2}+1\}\}$$
and
$$\Pi_2=\{\{1,n\},\{2,n-1\},\ldots,\{\frac{n-1}{2},\frac{n+3}{2}\},\{\frac{n+1}{2}\}\}$$
are equitable for $n$ even and odd, respectively. Note that for any graph $G$, the rank of the walk matrix $W(G)$ is less than or equal to the number of cells in any equitable partitions of the vertex set of $G$ (cf. \cite{lon,same}). Thus, we have $\rank W(A_n)\leq r$.

For each positive integer $m$, we denote by $O_m$ the square zero matrix of order $m$. For any square matrix $M$, set $M\oplus {O_{m}}=\begin{bmatrix}M&\\&{O_{m}}\end{bmatrix}$.
\begin{lemma} \label{ds}
The matrices $W(A_n)$ and $\overline {W({A_n})}\oplus {O_{n - r}}$ have the same Smith normal form.
\end{lemma}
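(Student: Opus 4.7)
The lemma asserts that the $n \times n$ walk matrix $W(A_n)$ is equivalent over $\mathrm{GL}_n(\ZZ) \times \mathrm{GL}_n(\ZZ)$ to the block matrix $\overline{W(A_n)} \oplus O_{n-r}$, where only the top-left $r \times r$ corner is nontrivial. The plan is to exhibit this equivalence explicitly via two integer-invertible operations: a row reduction using the reflection symmetry of $A_n$, followed by a column reduction using Cayley--Hamilton on an invariant subspace. Since every elementary row/column operation I will use is over $\ZZ$ with unit determinant, the resulting matrix will have the same Smith normal form as $W(A_n)$.

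For the row reduction, I would note that the map $\sigma \colon i \mapsto n+1-i$ is a graph automorphism of $A_n$ which fixes the all-ones vector $e_n$. Consequently the number of walks of any length starting at vertex $i$ equals the number starting at vertex $n+1-i$, so row $i$ of $W(A_n)$ coincides with row $n+1-i$. For each $i$ with $r+1 \le i \le n$, its mirror index $n+1-i$ lies in $\{1,\ldots,r\}$, so subtracting row $n+1-i$ from row $i$ is an integer elementary operation that kills row $i$ entirely. After these operations the matrix has the shape
\[
\begin{bmatrix} W' \\ O \end{bmatrix},
\]
where $W'$ is the $r \times n$ submatrix consisting of the first $r$ rows of $W(A_n)$.

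For the column reduction, let $P$ denote the permutation matrix of $\sigma$ and let $V^{+} = \{v \in \ZZ^n : Pv = v\}$. Since $P$ commutes with the adjacency matrix $A$, the subspace $V^{+}$ is $A$-invariant; it is also free of rank exactly $r$ (a $\ZZ$-basis is $\{e_i + e_{n+1-i}\}$ together with $e_{(n+1)/2}$ when $n$ is odd). Crucially $e_n \in V^{+}$, so every column $A^k e_n$ of $W(A_n)$ lies in $V^{+}$. The restriction $A|_{V^{+}}$ has a monic characteristic polynomial $\chi(x) \in \ZZ[x]$ of degree $r$, hence by Cayley--Hamilton $\chi(A) e_n = 0$, which expresses $A^r e_n$ as an integer linear combination of $e_n, Ae_n, \ldots, A^{r-1} e_n$. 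Inductively, each $A^k e_n$ for $k \ge r$ is such an integer combination. Thus, for $k = r, r+1, \ldots, n-1$, I can subtract integer multiples of the first $r$ columns from the $(k+1)$-th column to zero it out; these column operations preserve the zeros already produced in the bottom $n-r$ rows.

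Combining the two steps yields precisely $\overline{W(A_n)} \oplus O_{n-r}$, proving the lemma. The main technical point that needs care is integrality in the column step: it is not a priori obvious that the coefficients reducing $A^k e_n$ modulo lower powers are integers rather than merely rationals. The invariant-subspace argument above — isolating an $A$-invariant $\ZZ$-sublattice of rank $r$ containing $e_n$ — is what guarantees this and is the crux of the proof. The row step is essentially automatic from the reflection symmetry of $A_n$ and the symmetry of $e_n$.
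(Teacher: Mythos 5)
Your proof is correct, and the column-reduction step takes a genuinely different route from the paper. Both arguments handle the rows identically: the reflection $i\mapsto n+1-i$ is an automorphism fixing $e_n$, so row $i$ equals row $n+1-i$ and rows $r+1,\ldots,n$ can be killed by integer row operations. For the columns, however, the paper simply cites the known result (Hagos; Rowlinson) that if $t=\rank W(A_n)$ then the first $t$ columns form a $\mathbb{Z}$-basis of the column lattice, so that columns $r+1,\ldots,n$ are integer combinations of $e_n,Ae_n,\ldots,A^{t-1}e_n$; you instead give a self-contained integrality argument: $e_n$ lies in the rank-$r$ saturated $A$-invariant lattice of $\sigma$-fixed integer vectors, whose basis $\{e_i+e_{n+1-i}\}$ (plus $e_{(n+1)/2}$ for $n$ odd) is exactly the column lattice of the characteristic matrix of the equitable partition, so Cayley--Hamilton for the degree-$r$ monic integer characteristic polynomial of $A$ restricted to this lattice (essentially the divisor matrix $B_1$ or $B_2$) shows $A^k e_n$ for $k\ge r$ is a $\mathbb{Z}$-combination of $e_n,\ldots,A^{r-1}e_n$. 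Your version buys self-containedness and makes explicit the one genuinely delicate point — that the reduction coefficients are integers, not just rationals — at the cost of using an annihilating polynomial of degree $r$ rather than the sharper degree $t=\rank W(A_n)$; the paper's version is shorter but leans on the external main-eigenvalue result (and, as stated, on the slightly stronger fact that those columns span the column lattice over $\mathbb{Z}$, not merely that they are linearly independent). Both reductions are by unimodular integer operations, so either way the Smith normal form is preserved and the lemma follows.
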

\bp
Let $\rank W(A_n)=t$. Then $t\leq r$. Since the first $t$ columns of $W(A_n)$ are linearly independent over the integer ring $\mathbb{Z}$ (cf. \cite{same,same1}), they form a $\mathbb{Z}$-basis for the column space of $W(A_n)$. Thus the $i$-th column can be written as a $\mathbb{Z}$-linear combination of $${e_n},A{e_n}, \ldots ,{A^{t - 1}}{e_n}$$
for $r+1 \le i \le n$. Note that the $i$-th row and the $(n+1-i)$-th row are identical for $i=1,\ldots,r$. Hence, we obtain $\overline {W({A_n})}\oplus{O_{n - r}}$ by using the elementary row and column operations on $W(A_n)$. Therefore, $W(A_n)$ and $\overline {W({A_n})}\oplus {O_{n - r}}$ have the same Smith normal form.
\ep

Let
$$C_1=\begin{bmatrix}
1&&&\\
&\ddots&&\\
&&1&\\
&&&1\\
&&&1\\
&&1&\\
&{\mathinner{\mkern2mu\raise1pt\hbox{.}\mkern2mu
\raise4pt\hbox{.}\mkern2mu\raise7pt\hbox{.}\mkern1mu}}&&\\
1&&&
\end{bmatrix}
_{n \times r}~\text{and}~{C_2}={\begin{bmatrix}
1&{}&{}&{}\\
{}& \ddots &{}&{}\\
{}&{}&1&{}\\
{}&{}&{}&1\\
{}&{}&1&{}\\
{}& {\mathinner{\mkern2mu\raise1pt\hbox{.}\mkern2mu
 \raise4pt\hbox{.}\mkern2mu\raise7pt\hbox{.}\mkern1mu}} &{}&{}\\
1&{}&{}&{}
\end{bmatrix}_{n \times r}}$$
be the characteristic matrices of the partitions $\Pi_1$ and $\Pi_2$, respectively.
The divisor matrices of $\Pi_1$ and $\Pi_2$ are
\[{B_1} = {\begin{bmatrix}
0&1&{}&{}&{}\\
1&0&1&{}&{}\\
{}&\ddots & \ddots& \ddots &{}\\
{}&{}& 1&0 &1\\
{}&{}&{}&1&1
\end{bmatrix}_{r \times r}}~\text{and}~~{B_2} = {\begin{bmatrix}
0&1&{}&{}&{}\\
1&0&1&{}&{}\\
{}&\ddots & \ddots& \ddots &{}\\
{}&{}& 1&0 &1\\
{}&{}&{}&2&0
\end{bmatrix}_{r \times r}},\]
respectively.

\begin{lemma} \label{xy}
\begin{enumerate}
\item[(a)]~If $n$ is even, then $\overline{W(A_n)}=W(B_1)$.
\item[(b)]~If $n$ is odd, then $\overline{W(A_n)}=W(B_2)$.
\end{enumerate}
\end{lemma}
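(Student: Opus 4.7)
The plan is to exploit the equitability of $\Pi_1$ and $\Pi_2$ in the standard way: if $A$ denotes the adjacency matrix of $A_n$ and $C,B$ denote the characteristic and divisor matrices of the relevant partition, then equitability gives the intertwining relation $AC=CB$, which iterates to $A^k C = C B^k$ for every $k\ge 0$. Moreover, since each row of $C$ contains exactly one $1$, we have $C e_r = e_n$. Combining these two observations yields
\[
A^k e_n \;=\; A^k C e_r \;=\; C B^k e_r \qquad \text{for all } k\ge 0,
\]
which is the only real computational input we need.

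Next I would inspect $C_1$ and $C_2$ directly and observe that in both cases the top $r\times r$ block of $C$ is precisely the identity matrix $I_r$. For $C_1$ (with $n$ even, $r=n/2$) this is because the cells of $\Pi_1$ are listed in the order $\{1,n\},\{2,n-1\},\ldots,\{r,r+1\}$, so vertex $i$ lies in cell $i$ for $1\le i\le r$; for $C_2$ (with $n$ odd, $r=(n+1)/2$) the same listing together with the singleton cell $\{(n+1)/2\}=\{r\}$ again puts vertex $i$ in cell $i$ for $1\le i\le r$. Consequently, extracting the first $r$ rows of the vector $C B^k e_r$ returns $I_r\cdot B^k e_r = B^k e_r$.

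Finally I would assemble the two halves. The first $r$ columns of $W(A_n)$ are
\[
[\,e_n,\;A e_n,\;\ldots,\;A^{r-1} e_n\,]=[\,Ce_r,\;CB e_r,\;\ldots,\;CB^{r-1}e_r\,],
\]
and, by the previous paragraph, stripping these columns of their last $n-r$ rows leaves
\[
[\,e_r,\;B e_r,\;\ldots,\;B^{r-1} e_r\,] \;=\; W(B).
\]
But this is exactly the definition of $\overline{W(A_n)}$, so (a) and (b) follow simultaneously by choosing $C=C_1,B=B_1$ when $n$ is even and $C=C_2,B=B_2$ when $n$ is odd. There is no real obstacle; the only point that requires a moment of care is verifying the identity-block structure of $C_1$ and $C_2$, and recognizing that the symmetry of the cells (each containing either a pair $\{i,n+1-i\}$ or the fixed middle vertex) is precisely what makes the deletion of the last $n-r$ rows in the definition of $\overline{W(A_n)}$ compatible with the projection onto the first $r$ rows used above.
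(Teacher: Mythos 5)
Your proposal is correct and follows essentially the same route as the paper: the equitable-partition intertwining $AC=CB$, iterated to $A^kC=CB^k$, combined with $Ce_r=e_n$ and the observation that deleting the last $n-r$ rows of $C$ (equivalently, noting its top $r\times r$ block is $I_r$) yields $\overline{W(A_n)}=W(B)$. The only cosmetic difference is that you treat the even and odd cases uniformly, whereas the paper proves (a) and declares (b) analogous.
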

\bp We only prove (a) and the proof of (b) is similar.
Since $AC_1=C_1B_1$, we have ${A^k}{C_1} = {C_1}B_1^k$ for all $k\geq0$. Since $C_1e_r=e_n$, we obtain
${A^k}{e_n}={A^k}{C_1}{e_r}={C_1}{B_1^k}{e_r}$ for all $k\geq0$. It follows that
$$[{e_n}\quad A{e_n}\quad\cdots\quad{A^{r - 1}}{e_n}] = {C_1}[{e_r}\quad{B_1}{e_r}\quad\cdots\quad B_1^{r - 1}{e_r}].$$
If we delete the $(r+1)$-th to the $n$-th rows of $[{e_n}\quad A{e_n}\quad\cdots\quad{A^{r - 1}}{e_n}]$ and $C_1$, we get $\overline{W(A_n)}$ and the identity matrix, respectively. Hence, $\overline{W(A_n)}=W(B_1)$.
\ep

\begin{lemma}\cite{ch,ch1}\label{wb}
Let $M$ be a real $m\times m$ matrix which is diagonalizable over the real field $\mathbb{R}$. Let $\xi_{1},\xi_{2},\ldots,\xi_{m}$ be $m$ linearly independent eigenvectors of $M^{\rm T}$ corresponding to eigenvalues $\lambda_{1},\lambda_{2},\ldots,\lambda_{m}$, respectively.
Then we have
$${\det}W(M)=\frac{{\prod\nolimits_{1 \le k < j \le m} {({\lambda _j}-{\lambda _k})\prod\nolimits_{j = 1}^m {{e_m^{\rm T}}{\xi _j}} } }}{{\det [{\xi _1}\quad{\xi _2}\quad\cdots\quad{\xi _m}]}}.$$
Moreover, if $\lambda_{1},\lambda_{2},\ldots,\lambda_{m}$ are pairwise different, then \begin{equation*}\rank W(M)=|\{j~|~e_m^{\rm T}\xi_{j}\neq0~\text{and}~j=1,\ldots,m\}|.\end{equation*}
\end{lemma}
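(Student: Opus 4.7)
The plan is to diagonalize the walk matrix via the $M^{\rm T}$-eigenbasis and reduce to a Vandermonde determinant. Set $P=[\xi_1\ \xi_2\ \cdots\ \xi_m]^{\rm T}$, so the $j$-th row of $P$ is $\xi_j^{\rm T}$. Transposing $M^{\rm T}\xi_j=\lambda_j\xi_j$ gives $\xi_j^{\rm T}M=\lambda_j\xi_j^{\rm T}$, and a quick induction yields $\xi_j^{\rm T}M^k=\lambda_j^k\xi_j^{\rm T}$ for every $k\geq 0$. The linear independence of $\xi_1,\ldots,\xi_m$ also ensures $\det P\neq 0$.

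Applied to $W(M)=[e_m\ Me_m\ \cdots\ M^{m-1}e_m]$, this identity gives $(PW(M))_{jk}=\xi_j^{\rm T}M^{k-1}e_m=\lambda_j^{k-1}(e_m^{\rm T}\xi_j)$. Letting $D=\diag(e_m^{\rm T}\xi_1,\ldots,e_m^{\rm T}\xi_m)$ and denoting by $V$ the Vandermonde matrix with $V_{jk}=\lambda_j^{k-1}$, I therefore obtain the clean factorization
$$PW(M)=DV.$$
Taking determinants, using the standard Vandermonde formula $\det V=\prod_{1\le k<j\le m}(\lambda_j-\lambda_k)$ together with $\det P=\det[\xi_1\ \xi_2\ \cdots\ \xi_m]$ (since transposition preserves determinants), and dividing by $\det P\neq 0$, I recover the asserted formula for $\det W(M)$.

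For the rank statement, assume the $\lambda_j$ are pairwise distinct. Since $P$ is invertible, $\rank W(M)=\rank(DV)$. The $j$-th row of $DV$ equals the scalar $e_m^{\rm T}\xi_j$ times $(1,\lambda_j,\ldots,\lambda_j^{m-1})$; the rows with $e_m^{\rm T}\xi_j\neq 0$ form a subset of rows of a Vandermonde matrix on pairwise-distinct nodes and so remain linearly independent, while the remaining rows vanish. Hence $\rank W(M)=|\{j:e_m^{\rm T}\xi_j\neq 0\}|$.

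I do not foresee any substantive obstacle here; the one point that must be gotten right is to left-multiply by $P$ built from $M^{\rm T}$-eigenvectors (rather than $M$-eigenvectors), so that the products $\xi_j^{\rm T}M^{k}e_m$ collapse to $\lambda_j^{k}(e_m^{\rm T}\xi_j)$ and the Vandermonde structure emerges cleanly.
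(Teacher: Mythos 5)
Your proof is correct: the factorization $PW(M)=DV$ with $P$ built from the $M^{\rm T}$-eigenvectors, $D$ the diagonal of the quantities $e_m^{\rm T}\xi_j$, and $V$ the Vandermonde matrix in the $\lambda_j$ immediately yields both the determinant formula and the rank count. The paper itself gives no proof of this lemma (it is quoted from the cited references), and your argument is essentially the standard one used there, so nothing further is needed.
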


\section{Main results}
We are in a position to give the main result of this paper as the following
\bt\label{mainthm}  The walk matrix of the Dynkin graph $A_n$ has the Smith normal form $${\rm diag}(\underbrace{1,\ldots,1}_{\lceil\frac{n}{2}\rceil},0,\ldots,0).$$ \et

Before proving Theorem \ref{mainthm}, we give some preparatory work.
\begin{proposition}\label{ee}
Let $\lambda_{k}=-2\cos \alpha_{k}$ and
$$v_{k}=\begin{bmatrix}
(-1)^{r-1}(1+\sum\limits_{i=1}^{r-1} 2\cos i\alpha_{k}) \\
(-1)^{r-2}(1+\sum\limits_{i=1}^{r-2} 2\cos i\alpha_{k}) \\
\vdots \\
-1-2\cos\alpha_{k} \\
1
\end{bmatrix},$$
where $\alpha_{k}=\frac{2k}{2r+1}\pi$ for $k=1,\ldots,r$. Then $v_{k}$ is an eigenvector of $B_{1}^{\rm T}$ corresponding to the eigenvalue $\lambda_{k}$ for each $k=1,\ldots,r$.
\end{proposition}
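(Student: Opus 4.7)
The plan is to verify the eigenvalue equation $B_1^{\rm T} v_k = \lambda_k v_k$ by direct substitution. The first observation is that the tridiagonal matrix $B_1$ is symmetric, since the asymmetry of the row/column partitions is absorbed entirely into the $(r,r)$-entry $1$; hence $B_1^{\rm T} = B_1$, and it suffices to check $B_1 v_k = \lambda_k v_k$. Denoting the $j$-th component of $v_k$ by $w_j$, I would introduce the Dirichlet kernel
\[
s_m \;:=\; 1 + 2\sum_{i=1}^{m}\cos(i\alpha_k) \;=\; \frac{\sin\bigl((2m+1)\alpha_k/2\bigr)}{\sin(\alpha_k/2)}, \qquad s_0 := 1,
\]
so that $w_j = (-1)^{r-j}s_{r-j}$ uniformly for $j=1,\dots,r$. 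The eigenvalue equation then splits into three blocks: the bottom row $w_{r-1}+w_r = \lambda_k w_r$; the interior rows $w_{j-1}+w_{j+1} = \lambda_k w_j$ for $2\le j\le r-1$; and the top row $w_2 = \lambda_k w_1$.

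The bottom row is immediate, since $w_{r-1}+w_r = -1-2\cos\alpha_k + 1 = -2\cos\alpha_k = \lambda_k w_r$. For the interior rows, after substituting the formula for $w_j$ and cancelling the alternating sign $(-1)^{r-j}$, what remains is the Chebyshev-type three-term recurrence
\[
s_{m+1} + s_{m-1} \;=\; 2\cos\alpha_k \cdot s_m, \qquad m = 1,\dots,r-2,
\]
which is a standard trigonometric identity obtained from the closed form above together with the sum-to-product formula $\sin A + \sin B = 2\sin\tfrac{A+B}{2}\cos\tfrac{A-B}{2}$.

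The top row is the step I expect to be the main point, because it is precisely where the specific value $\alpha_k = \frac{2k\pi}{2r+1}$ is forced. Rewriting $w_2 = \lambda_k w_1$ in terms of $s_{r-2}$ and $s_{r-1}$ reduces it to $s_{r-2} = 2\cos\alpha_k \cdot s_{r-1}$; comparing with the formal extension of the interior recurrence to index $m = r-1$, this is equivalent to the vanishing condition $s_r = 0$. Via the closed form, $s_r = 0$ amounts to $\sin\bigl((2r+1)\alpha_k/2\bigr) = 0$, and indeed
\[
\tfrac{(2r+1)\alpha_k}{2} \;=\; \tfrac{2r+1}{2}\cdot\tfrac{2k\pi}{2r+1} \;=\; k\pi,
\]
so the sine vanishes as required. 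Since $w_r = 1 \ne 0$, the vector $v_k$ is nonzero, completing the verification.
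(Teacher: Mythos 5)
Your proof is correct and follows essentially the same route as the paper: an entrywise verification of $B_1^{\rm T}v_k=\lambda_k v_k$ split into top, interior, and bottom rows, with the same trigonometric identities doing the work. Your Dirichlet-kernel packaging $s_m=\frac{\sin((2m+1)\alpha_k/2)}{\sin(\alpha_k/2)}$ and the reduction of the top row to $s_r=0$ is just a cleaner reorganization of the paper's computation, since $s_r=0$ is exactly the paper's identity $\sum_{i=1}^{r}\cos i\alpha_k=-\tfrac12$, proved there by the same telescoping argument using $\sin\bigl((r+\tfrac12)\alpha_k\bigr)=\sin k\pi=0$.
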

\begin{proof}
For any $k=1,\ldots,r$, firstly, we show that the first entries of $B_{1}^{\rm T}v_{k}$ and $\lambda_{k}v_{k}$ are the same, i.e.
\begin{equation}\label{LR}
(-1)^{r-2}(1+\sum\limits_{i=1}^{r-2}2\cos i\alpha_{k}) = -2\cos \alpha_{k}(-1)^{r-1}(1+\sum\limits_{i=1}^{r-1} 2\cos i\alpha_{k}).\end{equation}

Let us calculate the right hand side of (\ref{LR})
$${\rm RHS}~\text{of}~(\ref{LR})= 2(-1)^{r-2}(\cos \alpha_{k}+\sum\limits_{i=1}^{r-1} 2\cos i\alpha_{k}\cos \alpha_{k}).$$
By applying the formula $2\cos\alpha \cos\beta = \cos (\alpha + \beta) + \cos (\alpha - \beta)$, we have
\begin{equation}
\begin{split}
{\rm RHS}~\text{of}~(\ref{LR})&=2(-1)^{r-2}(\cos \alpha_{k}+\sum\limits_{i=1}^{r-1} (\cos (i+1)\alpha_{k}+\cos (i-1)\alpha_{k})) \\
&=2(-1)^{r-2}(\sum\limits_{i=1}^{r} \cos i\alpha_{k}+1+\sum\limits_{i=1}^{r-2} \cos i\alpha_{k})\label{a}.
\end{split}
\end{equation}
Note that $\sin\alpha\cos\beta = \frac{1}{2}(\sin(\beta+\alpha)-\sin(\beta-\alpha))$, and
\begin{equation}\label{js1}
\begin{split}\sin\frac{1}{2}\alpha_{k}\sum\limits_{i=1}^{r}\cos i\alpha_{k} &= \frac{1}{2}\sum\limits_{i=1}^{r}(\sin(i+\frac{1}{2})\alpha_{k}-\sin(i-\frac{1}{2})\alpha_{k})\\
&= \frac{1}{2}(\sin(r+\frac{1}{2})\alpha_{k}-\sin\frac{1}{2}\alpha_{k}).
\end{split}
\end{equation}
Since $\frac{1}{2}\alpha_{k}=\frac{k}{2r+1}\pi\in(0,\frac{\pi}{2})$, we have $\sin\frac{1}{2}\alpha_{k}\neq0$ and $\sin(r+\frac{1}{2})\alpha_{k}=\sin k\pi=0$. Thus, by (\ref{js1}), we get
\begin{equation}\label{b}
\sum\limits_{i=1}^{r}\cos i\alpha_{k}=-\frac{1}{2}.
\end{equation}
Substituting (\ref{b}) into (\ref{a}), we obtain
$${\rm RHS}~\text{of}~(\ref{LR}) = 2(-1)^{r-2}(\frac{1}{2}+\sum\limits_{i=1}^{r-2}\cos i\alpha_{k})={\rm LHS}~\text{of}~(\ref{LR}).$$

Now, we prove that the $j$-th entries of $B_{1}^{\rm T}v_{k}$ and $\lambda_{k}v_{k}$ are equal for $j=2,\ldots,r-1$. In fact, using $\cos\alpha+\cos\beta=2\cos \frac{\alpha+\beta}{2}\cos\frac{\alpha-\beta}{2}$, we obtain that the $j$-th entry of $B_{1}^{\rm T}v_{k}$ is
\begin{equation*}
\begin{split}
&(-1)^{r-j+1}(1+\sum\limits_{i=1}^{r-j+1}2\cos i\alpha_{k})+(-1)^{r-j-1}(1+\sum\limits_{i=1}^{r-j-1}2\cos i\alpha_{k}) \\
&=(-1)^{r-j+1}(2+2\cos\alpha_{k}+2\cos 2\alpha_{k}+\sum\limits_{i=2}^{r-j}2(\cos(i+1)\alpha_{k}+\cos(i-1)\alpha_{k})) \\
&=(-1)^{r-j+1}(2\cos\alpha_{k}+4\cos\alpha_{k}^{2}+\sum\limits_{i=2}^{r-j}4\cos i\alpha_{k}\cos\alpha_{k})\\
&=-2\cos\alpha_{k}(-1)^{r-j}(1+\sum\limits_{i=1}^{r-j}2\cos i\alpha_{k}).
\end{split}
\end{equation*}

By
$$-1-2\cos \alpha_{k}+1=-2\cos\alpha_{k},$$ clearly, the last entries of $B_{1}^{\rm T}v_{k}$ and $\lambda_{k}v_{k}$ are also equal. Therefore, $B_{1}^{\rm T}v_{k}=\lambda_{k}v_{k}$.
\end{proof}

\begin{lemma}\label{ev}
Let $e_{r}$ be the all-ones vector of dimension $r$, and $v_{k}$ be as above. Then $$\prod\limits_{k=1}^{r}e_{r}^{\rm T}v_{k}=(-1)^{\lfloor\frac{r}{2}\rfloor}.$$
\end{lemma}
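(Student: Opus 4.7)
The plan is to obtain a closed form for each $e_r^{\rm T}v_k$ and then evaluate the resulting product explicitly. The key identity is the Dirichlet-type sum
\[
1 + \sum_{i=1}^{m} 2\cos i\alpha \;=\; \frac{\sin\bigl((m+\tfrac12)\alpha\bigr)}{\sin(\alpha/2)},
\]
which is essentially the same identity that underlies \eqref{b} in the proof of Proposition~\ref{ee}. Writing $\beta_k := \tfrac{k\pi}{2r+1}$ so that $\alpha_k = 2\beta_k$ and $\sin(\alpha_k/2)=\sin\beta_k$, this rewrites the $j$-th entry of $v_k$ as
\[
(v_k)_j \;=\; (-1)^{r-j}\,\frac{\sin\bigl((2(r-j)+1)\beta_k\bigr)}{\sin\beta_k},\qquad j=1,\ldots,r.
\]

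Next I would substitute $\ell = r-j$ and sum over $\ell = 0, \ldots, r-1$. Recognising a geometric series with ratio $-e^{2i\beta_k}$ gives
\[
\sum_{\ell=0}^{r-1}(-1)^{\ell}e^{i(2\ell+1)\beta_k}
\;=\; \frac{e^{i\beta_k}\bigl(1-(-1)^{r}e^{2ir\beta_k}\bigr)}{1+e^{2i\beta_k}}
\;=\; \frac{1-(-1)^{r}e^{2ir\beta_k}}{2\cos\beta_k}.
\]
Taking imaginary parts and then using the crucial identity $2r\beta_k = k\pi-\beta_k$, which yields $\sin(2r\beta_k) = (-1)^{k+1}\sin\beta_k$, collapses everything to
\[
e_r^{\rm T}v_k \;=\; \frac{1}{\sin\beta_k}\sum_{\ell=0}^{r-1}(-1)^{\ell}\sin\bigl((2\ell+1)\beta_k\bigr) \;=\; \frac{(-1)^{r+k}}{2\cos\beta_k}.
\]

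Multiplying over $k = 1,\ldots,r$ then produces
\[
\prod_{k=1}^{r}e_r^{\rm T}v_k \;=\; \frac{(-1)^{\,r^2 + r(r+1)/2}}{\displaystyle\prod_{k=1}^{r}2\cos\dfrac{k\pi}{2r+1}},
\]
and at this stage I would invoke the classical product formula
\[
\prod_{k=1}^{r}2\cos\frac{k\pi}{2r+1} \;=\; 1.
\]
If a self-contained derivation is needed, this follows from the Chebyshev factorisation $U_{2r}(x) = 2^{2r}\prod_{k=1}^{2r}\bigl(x-\cos\tfrac{k\pi}{2r+1}\bigr)$ evaluated at $x=0$, combined with the symmetry $\cos\tfrac{(2r+1-k)\pi}{2r+1} = -\cos\tfrac{k\pi}{2r+1}$ and the positivity of $\cos\tfrac{k\pi}{2r+1}$ for $1\le k\le r$.

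All that remains is to check the sign identity $r^2 + r(r+1)/2 \equiv \lfloor r/2\rfloor \pmod 2$, which I would verify by splitting on the parity of $r$: for $r=2s$ the exponent reduces to $s(6s+1)\equiv s \pmod 2$, and for $r=2s+1$ it reduces to $(2s+1)(3s+2)\equiv s \pmod 2$, in each case matching $\lfloor r/2\rfloor = s$. The main obstacle is the compact evaluation of $e_r^{\rm T}v_k$ in the second step; once the Dirichlet-kernel rewrite and the telescoping identity $2r\beta_k = k\pi-\beta_k$ are in hand, the remainder of the argument is bookkeeping.
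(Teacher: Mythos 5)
Your proposal is correct, and every step checks out: the Dirichlet-kernel rewrite of the entries of $v_k$, the geometric-series evaluation giving $e_r^{\rm T}v_k=\frac{(-1)^{r+k}}{2\cos\frac{k\pi}{2r+1}}$ (I verified the imaginary-part step and the use of $2r\beta_k=k\pi-\beta_k$), the classical product $\prod_{k=1}^{r}2\cos\frac{k\pi}{2r+1}=1$ via the Chebyshev factorization of $U_{2r}$ together with the positivity of $\cos\frac{k\pi}{2r+1}$ for $1\le k\le r$, and the final parity identity $r^2+\frac{r(r+1)}{2}\equiv\lfloor\frac r2\rfloor \pmod 2$. However, your route is genuinely different from the paper's. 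The paper first exploits the alternating signs in $v_k$ to cancel most terms (which forces a case split on the parity of $r$), then multiplies by $\sin\alpha_k$ and telescopes to get $e_r^{\rm T}v_k=\pm\frac{\sin r\alpha_k}{\sin\alpha_k}$ as in (\ref{c}); the product over $k$ is then evaluated by observing that the numbers $\sin r\alpha_k$ are, up to signs, a permutation of the numbers $\sin\alpha_k$, and counting the minus signs gives $(-1)^{\lfloor r/2\rfloor}$. You instead obtain a sharper, uniform closed form for each factor (with no parity split until the final sign bookkeeping), which in particular shows directly that every $e_r^{\rm T}v_k$ is nonzero, but at the price of importing the classical cosine-product identity (or the short Chebyshev argument you sketch). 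The paper's argument stays entirely within elementary telescoping and angle matching; yours trades that for an explicit per-$k$ evaluation plus a known product formula. Both are valid proofs of Lemma \ref{ev}.
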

\begin{proof}
Firstly, we show that
\begin{equation}\label{c}
\prod\limits_{k=1}^{r}e_{r}^{\rm T}v_{k}=\prod\limits_{k=1}^{r}\frac{\sin r\alpha_{k}}{\sin\alpha_{k}}.
\end{equation}
If $r$ is odd, then
$$e_{r}^{\rm T}v_{k}=1+2\cos2\alpha_{k}+2\cos4\alpha_{k}+\cdots+2\cos(r-3)\alpha_{k}+2\cos(r-1)\alpha_{k}.$$
By applying the formula $2\cos\alpha\sin\beta=\sin(\alpha+\beta)-\sin(\alpha-\beta)$, we have
\begin{equation*}
\begin{split}
&(1+2\cos2\alpha_{k}+2\cos4\alpha_{k}+\cdots+2\cos(r-3)\alpha_{k}+2\cos(r-1)\alpha_{k})\sin\alpha_{k}\\
&=\sin\alpha_{k}+(\sin3\alpha_{k}-\sin\alpha_{k})+\cdots+(\sin r\alpha_{k}-\sin(r-2)\alpha_{k}) \\
&=\sin r\alpha_{k}.
\end{split}
\end{equation*}
Note that $\alpha_{k}=\frac{2k}{2r+1}\pi\in(0,\pi)$, so $\sin\alpha_{k}\neq0$. Hence, Equation (\ref{c}) holds.

If $r$ is even, then $$e_{r}^{\rm T}v_{k}=-2\cos\alpha_{k}-2\cos3\alpha_{k}-\cdots-2\cos(r-3)\alpha_{k}-2\cos(r-1)\alpha_{k}.$$
Similarly, $$(-2\cos\alpha_{k}-2\cos3\alpha_{k}-\cdots-2\cos(r-3)\alpha_{k}-2\cos(r-1)\alpha_{k})\sin\alpha_{k}=-\sin r\alpha_{k}.$$
In this case, we have $$\prod\limits_{k=1}^{r}e_{r}^{\rm T}v_{k}=(-1)^{r}\prod\limits_{k=1}^{r}\frac{\sin r\alpha_{k}}{\sin\alpha_{k}}=\prod\limits_{k=1}^{r}\frac{\sin r\alpha_{k}}{\sin\alpha_{k}}.$$
Hence, Equation (\ref{c}) holds.

Note that
\begin{equation*}r\alpha_{k}=k\pi-\frac{k}{2r+1}\pi~\text{and}~r\alpha_{k}=(k-1)\pi+\frac{2r-k+1}{2r+1}\pi.\end{equation*}

Suppose that $r$ is odd. When $k$ is odd, we have $$\sin r\alpha_{k}=\sin\frac{2r-k+1}{2r+1}\pi$$ and $\{2r-k+1~|~k=1,3,\ldots,r\}=\{r+1,r+3,\ldots,2r\}$.
When $k$ is even, we have $$\sin r\alpha_{k}=-\sin\frac{k}{2r+1}\pi,$$ where $k=2,4,\ldots,r-1$. Thus, in this case,
$$\prod\limits_{k=1}^{r}\sin r\alpha_{k}=(-1)^{\lfloor\frac{r}{2}\rfloor}\prod\limits_{k=1}^{r}\sin\frac{2k}{2r+1}\pi=(-1)^{\lfloor\frac{r}{2}\rfloor}\prod\limits_{k=1}^{r}\sin\alpha_{k}.$$

Suppose that $r$ is even. When $k$ is odd, we have $$\sin r\alpha_{k}=\sin\frac{2r-k+1}{2r+1}\pi$$ and $\{2r-k+1~|~k=1,3,\ldots,r-1\}=\{r+2,r+4,\ldots,2r\}$.
When $k$ is even, we have $$\sin r\alpha_{k}=-\sin\frac{k}{2r+1}\pi,$$ where $k=2,4,\ldots,r$. Thus, we also have
$$\prod\limits_{k=1}^{r}\sin r\alpha_{k}=(-1)^{\lfloor\frac{r}{2}\rfloor}\prod\limits_{k=1}^{r}\sin\alpha_{k}.$$
Therefore, we obtain $\prod\limits_{k=1}^{r}e_{r}^{\rm T}v_{k}=(-1)^{\lfloor\frac{r}{2}\rfloor}$, and complete the proof.
\end{proof}

The following lemma is needed for calculating $\det\left[v_{1}\quad v_{2}\quad \cdots\quad v_{r}\right]$.
\begin{lemma}\label{cc}\cite[Lemma 11]{Wang}
It holds that
$$\begin{vmatrix}
1 & 1 & \cdots & 1 \\
2\cos\theta_{1} & 2\cos\theta_{2} & \cdots & 2\cos\theta_{q} \\
2\cos2\theta_{1} & 2\cos2\theta_{2} & \cdots & 2\cos2\theta_{q} \\
\vdots & \vdots &  & \vdots \\
2\cos(q-1)\theta_{1} & 2\cos(q-1)\theta_{2} & \cdots & 2\cos(q-1)\theta_{q}
\end{vmatrix}=\prod\limits_{1\leq j<i\leq q}(2\cos\theta_{i}-2\cos\theta_{j}).$$
\end{lemma}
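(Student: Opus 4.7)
The plan is to combine the reduction lemmas with the eigenvalue/eigenvector formula in Lemma \ref{wb} so that the problem collapses to showing a single determinant equals $\pm 1$, after which the Chebyshev/Vandermonde structure of the ingredients does the work.

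First, I would invoke Lemma \ref{ds} to identify the Smith normal form of $W(A_n)$ with that of $\overline{W(A_n)} \oplus O_{n-r}$, and then Lemma \ref{xy} to rewrite $\overline{W(A_n)}$ as $W(B_1)$ when $n$ is even and as $W(B_2)$ when $n$ is odd. Since the invariant factors of an integer matrix satisfy $d_1 \mid d_2 \mid \cdots \mid d_r$ with $\prod d_i = |\det|$, asserting that all $r$ nonzero invariant factors equal $1$ is equivalent to $|\det W(B_i)| = 1$. Thus the whole theorem reduces to proving $\det W(B_1) = \pm 1$ when $n$ is even and $\det W(B_2) = \pm 1$ when $n$ is odd.

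Second, for the even case I would apply Lemma \ref{wb} to $M = B_1$. The matrix $B_1$ is real symmetric, hence diagonalizable over $\mathbb{R}$; in the odd case, the diagonal similarity by $\diag(1,\ldots,1,1/\sqrt{2})$ makes $B_2$ symmetric, yielding the same conclusion. Proposition \ref{ee} supplies the required $r$ eigenvectors $v_1, \ldots, v_r$ of $B_1^{\mathrm T}$ with eigenvalues $\lambda_k = -2\cos\alpha_k$; distinctness follows because $\alpha_k = \frac{2k\pi}{2r+1} \in (0,\pi)$ and cosine is strictly monotone there. Substituting into Lemma \ref{wb} yields
\[
\det W(B_1) \;=\; \frac{\displaystyle\prod_{1\leq k<j\leq r}(\lambda_j-\lambda_k)\cdot\prod_{j=1}^{r} e_r^{\mathrm T} v_j}{\det[v_1\;v_2\;\cdots\;v_r]}.
\]
Lemma \ref{ev} identifies the middle product as $(-1)^{\lfloor r/2\rfloor}$, while the first product equals $\prod_{j<i}(2\cos\alpha_i-2\cos\alpha_j)$ up to a tracked sign, which is precisely the right-hand side of the cosine-Vandermonde identity in Lemma \ref{cc}.

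Third, I would compute the denominator $\det[v_1,\ldots,v_r]$ and match it against Lemma \ref{cc}. The entries of $v_k$ are signed Chebyshev partial sums of the form $\pm(1+\sum_{i=1}^m 2\cos i\alpha_k)$, so I plan to (i) multiply alternate rows by $-1$ to clear the $(-1)^{r-j}$ signs, and then (ii) successively add or subtract adjacent rows to telescope each partial sum down to its top Chebyshev term $2\cos j\alpha_k$. After these row operations the matrix becomes exactly the cosine-Vandermonde matrix in Lemma \ref{cc} with $\theta_k = \alpha_k$, so its determinant equals $\prod_{j<i}(2\cos\alpha_i - 2\cos\alpha_j)$ up to a tracked sign. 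This factor cancels the Vandermonde product in the numerator, leaving only $(-1)^{\lfloor r/2\rfloor}$ times an explicit sign, hence $\det W(B_1) \in \{\pm 1\}$ as required. The odd case runs in parallel once the analogs of Proposition \ref{ee} and Lemma \ref{ev} are set up for $B_2^{\mathrm T}$, using the angles $\beta_k$ determined by $n = 2r-1$ and the same Chebyshev-sum manipulations.

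The main obstacle is step three: carefully executing the row operations on $[v_1,\ldots,v_r]$ so that the Chebyshev partial sums telescope cleanly into the rows $1, 2\cos\theta_k, 2\cos 2\theta_k, \ldots$ of Lemma \ref{cc}, and tracking the accumulated sign through the row scalings and the eventual cancellation. Getting the final sign to match $(-1)^{\lfloor r/2\rfloor}$ from Lemma \ref{ev} so the quotient visibly lands in $\{\pm 1\}$ is the most delicate bookkeeping in the argument.
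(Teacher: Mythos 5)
Your proposal does not prove the statement it was asked to prove. The statement is Lemma \ref{cc} itself --- the cosine--Vandermonde identity
$\det\bigl[2\cos\bigl((i-1)\theta_j\bigr)\bigr]_{i,j}$ (with first row all ones) $=\prod_{j<i}(2\cos\theta_i-2\cos\theta_j)$ --- whereas what you have written is an outline of the proof of the main theorem (Theorem \ref{mainthm}), in the course of which you \emph{invoke} Lemma \ref{cc} as a known ingredient (``\ldots which is precisely the right-hand side of the cosine-Vandermonde identity in Lemma \ref{cc}''). Relative to the assigned statement this is circular: nowhere in your argument is the determinant identity actually established. Note that the paper itself does not prove this lemma either; it is quoted from [Wang, Lemma~11], so there is no internal proof to match against --- but a blind proof attempt still has to supply one.

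A correct and short proof goes as follows. Set $x=2\cos\theta$ and $p_k(x)=2\cos k\theta$ for $k\ge 1$, $p_0=1$. The product-to-sum formula gives the recurrence $p_{k+1}(x)=x\,p_k(x)-p_{k-1}(x)$ for $k\ge 2$, with $p_1(x)=x$ and $p_2(x)=x^2-2$, so by induction each $p_k$ is a \emph{monic} polynomial of degree $k$ in $x=2\cos\theta$ (essentially $2T_k(x/2)$ with $T_k$ the Chebyshev polynomial of the first kind). Hence the $(k+1)$-th row of the matrix, $\bigl(p_k(x_1),\ldots,p_k(x_q)\bigr)$ with $x_j=2\cos\theta_j$, differs from $\bigl(x_1^k,\ldots,x_q^k\bigr)$ by an integer linear combination of the earlier rows; row-reducing from the top down turns the matrix into the ordinary Vandermonde matrix in the variables $x_1,\ldots,x_q$ without changing the determinant, which yields $\prod_{1\le j<i\le q}(x_i-x_j)=\prod_{1\le j<i\le q}(2\cos\theta_i-2\cos\theta_j)$ as claimed. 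Your instinct about ``Chebyshev/Vandermonde structure'' is pointing in exactly this direction, but the argument has to be made for the lemma itself rather than assumed inside a proof of the theorem that depends on it.
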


\begin{proposition}\label{det}
Suppose that $n$ is even. Then the determinant of $W(B_{1})$ is
$$\det W(B_{1})=1.$$
\end{proposition}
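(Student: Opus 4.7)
The plan is to apply Lemma \ref{wb} to the matrix $M=B_1$. By Proposition \ref{ee}, the vectors $v_1,\ldots,v_r$ are eigenvectors of $B_1^{\rm T}$ with eigenvalues $\lambda_k=-2\cos\alpha_k$, where $\alpha_k=\frac{2k}{2r+1}\pi\in(0,\pi)$ for $k=1,\ldots,r$. Since $\cos$ is injective on $(0,\pi)$, the $\lambda_k$ are pairwise distinct; this ensures that $B_1$ is diagonalizable and that $v_1,\ldots,v_r$ are linearly independent, so Lemma \ref{wb} yields
$$\det W(B_1)=\frac{\prod_{1\le k<j\le r}(\lambda_j-\lambda_k)\cdot\prod_{j=1}^{r}e_r^{\rm T}v_j}{\det[v_1\ v_2\ \cdots\ v_r]}.$$
Lemma \ref{ev} already evaluates the numerator product of inner products to $(-1)^{\lfloor r/2\rfloor}$, so the remaining task is to compute $\det[v_1\ \cdots\ v_r]$ and to verify the cancellation against the Vandermonde factor.

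To evaluate the determinant, I would apply successive elementary row operations. First, factor the sign $(-1)^{r-j}$ out of the $j$-th row; this contributes an overall factor of $(-1)^{r(r-1)/2}$ and leaves the $(j,k)$-entry equal to $1+2\sum_{i=1}^{r-j}\cos i\alpha_k$. Then, for $j=1,\ldots,r-1$, subtract row $j+1$ from row $j$; the telescoping collapses each such difference to $2\cos(r-j)\alpha_k$, while row $r$ remains identically $1$. After reversing the order of the rows, which contributes a sign $(-1)^{\lfloor r/2\rfloor}$, the matrix is precisely in the form of Lemma \ref{cc} with $q=r$ and $\theta_k=\alpha_k$, whose determinant equals $\prod_{1\le j<i\le r}(2\cos\alpha_i-2\cos\alpha_j)$.

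Using $2\cos\alpha_k=-\lambda_k$, this rewrites as $\prod_{1\le j<i\le r}(\lambda_j-\lambda_i)$, and reindexing (so that in each pair the larger-indexed eigenvalue appears first) introduces another factor of $(-1)^{r(r-1)/2}$. Since $r(r-1)/2\equiv\lfloor r/2\rfloor\pmod 2$, the combined sign collapses to $(-1)^{\lfloor r/2\rfloor}$, giving
$$\det[v_1\ v_2\ \cdots\ v_r]=(-1)^{\lfloor r/2\rfloor}\prod_{1\le k<j\le r}(\lambda_j-\lambda_k).$$
Substituting this and Lemma \ref{ev} into the formula of Lemma \ref{wb}, the Vandermonde products and the factors $(-1)^{\lfloor r/2\rfloor}$ in the numerator and denominator cancel exactly, yielding $\det W(B_1)=1$. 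The main obstacle is the careful bookkeeping of signs through the two row operations (the row-scaling and the row reversal), and verifying the congruence $(-1)^{r(r-1)/2}=(-1)^{\lfloor r/2\rfloor}$; once this is done cleanly, every remaining piece is pure cancellation against Lemma \ref{ev} and Lemma \ref{cc}.
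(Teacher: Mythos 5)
Your proposal is correct and follows essentially the same route as the paper: apply Lemma \ref{wb} with the eigenvectors of Proposition \ref{ee}, use Lemma \ref{ev} for the product of inner products, and reduce $\det[v_1\ \cdots\ v_r]$ to Lemma \ref{cc} by telescoping row operations and a row reversal, arriving at the same value $(-1)^{\lfloor r/2\rfloor}\prod_{j<i}(\lambda_i-\lambda_j)$ and hence $\det W(B_1)=1$. The only differences are cosmetic (you extract the alternating signs before the row subtractions rather than after, and you explicitly note the distinctness of the eigenvalues and the diagonalizability hypothesis, which the paper leaves implicit).
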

\begin{proof}
Firstly, let us calculate the determinant of the matrix $\left[v_{1}\quad v_{2}\quad \cdots\quad v_{r}\right]$,
where each $v_{k}$ is the same as that in Proposition \ref{ee}. For each $i=1,\ldots,r-1$, adding the $(i+1)$-th row to the $i$-th row, we obtain
\begin{equation*}
\begin{split}
&\det\begin{bmatrix}
v_{1} & v_{2} & \cdots & v_{r}
\end{bmatrix} \\
&=\begin{vmatrix}
(-1)^{r-1}(1+\sum\limits_{i=1}^{r-1}2\cos i\alpha_{1})&(-1)^{r-1}(1+\sum\limits_{i=1}^{r-1}2\cos i\alpha_{2})& \cdots &(-1)^{r-1}(1+\sum\limits_{i=1}^{r-1}2\cos i\alpha_{r})\\
(-1)^{r-2}(1+\sum\limits_{i=1}^{r-2}2\cos i\alpha_{1})&(-1)^{r-2}(1+\sum\limits_{i=1}^{r-2}2\cos i\alpha_{2})& \cdots &(-1)^{r-2}(1+\sum\limits_{i=1}^{r-2}2\cos i\alpha_{r})\\
\vdots & \vdots & & \vdots \\
-1-2\cos\alpha_{1}&-1-2\cos\alpha_{2}& \cdots &-1-2\cos\alpha_{r}\\
1 & 1 & \cdots & 1
\end{vmatrix} \\
&=\begin{vmatrix}
(-1)^{r-1}2\cos(r-1)\alpha_{1}&(-1)^{r-1}2\cos(r-1)\alpha_{2}& \cdots &(-1)^{r-1}2\cos(r-1)\alpha_{r}\\
(-1)^{r-2}2\cos(r-2)\alpha_{1}&(-1)^{r-2}2\cos(r-2)\alpha_{2}& \cdots &(-1)^{r-2}2\cos(r-2)\alpha_{r}\\
\vdots & \vdots & & \vdots \\
-2\cos\alpha_{1}&-2\cos\alpha_{2}& \cdots &-2\cos\alpha_{r}\\
1 & 1 & \cdots & 1
\end{vmatrix} \\
&=(-1)^{\frac{(r-1)r}{2}}\begin{vmatrix}
2\cos(r-1)\alpha_{1} & 2\cos(r-1)\alpha_{2} & \cdots & 2\cos(r-1)\alpha_{r} \\
2\cos(r-2)\alpha_{1} & 2\cos(r-2)\alpha_{2} & \cdots & 2\cos(r-2)\alpha_{r} \\
\vdots & \vdots & & \vdots \\
2\cos\alpha_{1}&2\cos\alpha_{2}&\cdots&2\cos\alpha_{r} \\
1 & 1 &\cdots& 1
\end{vmatrix} \\
&=(-1)^{\frac{(r-1)r}{2}+\lfloor\frac{r}{2}\rfloor}\begin{vmatrix}1 & 1 &\cdots& 1\\
2\cos\alpha_{1}&2\cos\alpha_{2}&\cdots&2\cos\alpha_{r} \\\vdots & \vdots & & \vdots \\2\cos(r-2)\alpha_{1} & 2\cos(r-2)\alpha_{2} & \cdots & 2\cos(r-2)\alpha_{r} \\
2\cos(r-1)\alpha_{1} & 2\cos(r-1)\alpha_{2} & \cdots & 2\cos(r-1)\alpha_{r}
\end{vmatrix}.
\end{split}
\end{equation*}
Using Lemma \ref{cc}, we get
\begin{equation*}
\begin{split}
\det\begin{bmatrix}v_{1} & v_{2} & \cdots & v_{r}
\end{bmatrix}&=(-1)^{\frac{(r-1)r}{2}+\lfloor\frac{r}{2}\rfloor}\prod\limits_{1\leq j<i\leq r}(2\cos\alpha_{i}-2\cos\alpha_{j})\\
&=(-1)^{\lfloor\frac{r}{2}\rfloor}\prod\limits_{1\leq j<i\leq r}(-2\cos\alpha_{i}+2\cos\alpha_{j})\\
&=(-1)^{\lfloor\frac{r}{2}\rfloor}\prod\limits_{1\leq j<i\leq r}(\lambda_{i}-\lambda_{j}),
\end{split}
\end{equation*}
where we recall that $\lambda_k:=-2\cos\alpha_{k}$.
By Lemma \ref{wb} and Lemma \ref{ev}, we complete the proof.
\end{proof}

\begin{proposition}\label{ee1}
Let $\mu_{k}=2\cos\beta_{k}$ and $$w_{k}=\begin{bmatrix}
2\cos(r-1)\beta_{k} \\
2\cos(r-2)\beta_{k} \\
\vdots \\
2\cos\beta_{k} \\
1
\end{bmatrix},$$
where $\beta_{k}=\frac{2k-1}{2r}\pi$ for $k=1,\ldots,r$. Then $w_{k}$ is an eigenvector of $B_{2}^{\rm T}$ corresponding to the eigenvalue $\mu_{k}$ for each $k=1,\ldots,r$.
\end{proposition}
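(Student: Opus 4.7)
My plan is to verify the eigenvector equation $B_2^{\rm T}w_k=\mu_k w_k$ directly, coordinate by coordinate. The transpose $B_2^{\rm T}$ is tridiagonal except that the $(r-1,r)$ entry equals $2$ (instead of $1$), so the verification splits naturally into four cases: the first row, the generic interior rows $j=2,\dots,r-2$, the special $(r-1)$-th row, and the last row. Writing $w_k(j)=2\cos(r-j)\beta_k$ for $1\le j\le r-1$ and $w_k(r)=1$, the target identities are
\begin{align*}
w_k(2) &= \mu_k\,w_k(1), \\
w_k(j-1)+w_k(j+1) &= \mu_k\,w_k(j),\qquad 2\le j\le r-2,\\
w_k(r-2)+2\,w_k(r) &= \mu_k\,w_k(r-1),\\
w_k(r-1) &= \mu_k\,w_k(r).
\end{align*}

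The main tools will be the product-to-sum identity $2\cos\alpha\cos\beta=\cos(\alpha+\beta)+\cos(\alpha-\beta)$ and the double-angle identity $2\cos^2\beta_k=1+\cos 2\beta_k$. The interior rows $2\le j\le r-2$ fall out immediately from product-to-sum: the left-hand side is $2\cos(r-j+1)\beta_k+2\cos(r-j-1)\beta_k$, which equals $4\cos\beta_k\cos(r-j)\beta_k=\mu_k w_k(j)$. The last row is trivial since $w_k(r-1)=2\cos\beta_k=\mu_k\cdot 1$. The $(r-1)$-th row uses the special coefficient $2$: $2\cos 2\beta_k+2=4\cos^2\beta_k=\mu_k\cdot 2\cos\beta_k$, where the factor $2$ in $B_2$ is exactly what is needed to absorb the ``$-2$'' from the double-angle formula; this is the structural reason for the asymmetric last row of $B_2$.

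The only subtle case is the first row, which requires $2\cos(r-2)\beta_k=4\cos\beta_k\cos(r-1)\beta_k$; by product-to-sum the right-hand side equals $2\cos r\beta_k+2\cos(r-2)\beta_k$, so the identity is equivalent to $\cos r\beta_k=0$. This is precisely where the specific choice $\beta_k=\frac{2k-1}{2r}\pi$ enters: $r\beta_k=\frac{2k-1}{2}\pi$ is an odd multiple of $\pi/2$, so $\cos r\beta_k=0$. I expect this to be the only non-mechanical step, and it is really a consistency check on how the $\beta_k$'s were defined: they are exactly the values that make the tridiagonal recurrence compatible with the top boundary condition. The main potential pitfall is simply index bookkeeping around the switch between ``$w_k(j)=2\cos(r-j)\beta_k$'' and ``$w_k(r)=1$'' at the bottom of the vector; keeping the cases $j=r-1$ and $j=r$ separate avoids any confusion.
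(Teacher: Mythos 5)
Your proof is correct and follows essentially the same route as the paper: a row-by-row verification of $B_2^{\rm T}w_k=\mu_k w_k$ via the product-to-sum identity, with the choice $\beta_k=\frac{2k-1}{2r}\pi$ entering only through $\cos r\beta_k=0$ in the first row. You are in fact slightly more explicit than the paper about the two boundary rows (the $(r-1)$-th row with the coefficient $2$, handled by the double-angle formula, and the trivial last row), which the paper dismisses as ``easy to see.''
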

\begin{proof}
By the formula $\cos\alpha+\cos\beta=2\cos\frac{\alpha+\beta}{2}\cos\frac{\alpha-\beta}{2}$, we have $$\cos(i-1)\beta_{k}+\cos(i+1)\beta_{k}=2\cos i\beta_{k}\cos\beta_{k}$$
for each $i=2,\ldots,r-1$. Noting that $\cos r\beta_{k}=\cos\frac{2k-1}{2}\pi=0$, we have $$2\cos (r-2)\beta_{k}=2\cos (r-2)\beta_{k}+2\cos r\beta_{k}=4\cos (r-1)\beta_{k}\cos \beta_{k}.$$
Then it is easy to see
$$\begin{bmatrix}
0 & 1 &   &   &   \\
1 & 0 & 1 &   &   \\
& \ddots & \ddots & \ddots &   \\
&   & 1 & 0 & 2 \\
&   &   & 1 & 0
\end{bmatrix}
\begin{bmatrix}
2\cos(r-1)\beta_{k} \\
2\cos(r-2)\beta_{k} \\
\vdots \\
2\cos\beta_{k} \\
1
\end{bmatrix}
=2\cos\beta_{k}
\begin{bmatrix}
2\cos(r-1)\beta_{k} \\
2\cos(r-2)\beta_{k} \\
\vdots \\
2\cos\beta_{k} \\
1
\end{bmatrix}.$$ Hence, $B_{2}^{\rm T}w_{k}=\mu_{k}w_{k}$.
\end{proof}

\begin{lemma}\label{ev1}
Let $e_{r}$ be the all-ones vector of dimension $r$, and $w_{k}$ be as above. Then $$\prod\limits_{k=1}^{r}e_{r}^{\rm T}w_{k}=(-1)^{\lfloor\frac{r}{2}\rfloor}.$$
\end{lemma}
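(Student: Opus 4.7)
My plan is to mimic the strategy of Lemma \ref{ev}: first obtain a clean closed form for each factor $e_r^{\rm T} w_k$, and then combine the factors using the symmetry $\beta_k \leftrightarrow \beta_{r+1-k}$. Writing out the dot product, $e_r^{\rm T} w_k = 1 + 2\cos\beta_k + 2\cos 2\beta_k + \cdots + 2\cos(r-1)\beta_k$, which is exactly a Dirichlet-kernel sum. The same telescoping via $2\cos j\beta \sin(\beta/2) = \sin((j+1/2)\beta) - \sin((j-1/2)\beta)$ that was used in Lemma \ref{ev} gives
\[
e_r^{\rm T} w_k \;=\; \frac{\sin\bigl((r-\tfrac12)\beta_k\bigr)}{\sin(\beta_k/2)}.
\]
Since $\sin(\beta_k/2) = \sin\frac{(2k-1)\pi}{4r} \neq 0$, this expression is well defined.

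Next, the key simplification is to evaluate the numerator at $\beta_k=\frac{2k-1}{2r}\pi$. Using $\sin\bigl((r-\tfrac12)\beta_k\bigr) = \sin\bigl(r\beta_k - \beta_k/2\bigr)$ together with $r\beta_k = \frac{(2k-1)\pi}{2}$, so that $\cos r\beta_k = 0$ and $\sin r\beta_k = (-1)^{k-1}$, I will obtain
\[
\sin\bigl((r-\tfrac12)\beta_k\bigr) \;=\; (-1)^{k-1}\cos\frac{(2k-1)\pi}{4r}.
\]
Therefore $e_r^{\rm T}w_k = (-1)^{k-1}\cot\frac{(2k-1)\pi}{4r}$, and
\[
\prod_{k=1}^{r} e_r^{\rm T} w_k \;=\; (-1)^{\sum_{k=1}^{r}(k-1)}\prod_{k=1}^{r}\cot\frac{(2k-1)\pi}{4r}
\;=\; (-1)^{r(r-1)/2}\prod_{k=1}^{r}\cot\frac{(2k-1)\pi}{4r}.
\]

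Now I would pair the $k$-th and $(r+1-k)$-th cotangent factors: since $\frac{(2k-1)\pi}{4r} + \frac{(2(r+1-k)-1)\pi}{4r} = \frac{\pi}{2}$, the two factors are reciprocals of each other and multiply to $1$. When $r$ is even every $k$ has a distinct partner, and when $r$ is odd the middle index $k=\frac{r+1}{2}$ gives the angle $\pi/4$, contributing $\cot(\pi/4)=1$. Either way the cotangent product collapses to $1$, so $\prod_k e_r^{\rm T} w_k = (-1)^{r(r-1)/2}$.

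The only remaining step is the parity identification $(-1)^{r(r-1)/2} = (-1)^{\lfloor r/2\rfloor}$, which is a routine case check on $r\bmod 4$ (both exponents have the pattern $0,1,1,0$ as $r$ ranges over $1,2,3,4$, and both shift by $2$ when $r\mapsto r+4$). I do not anticipate any substantial obstacle: the only delicate point is bookkeeping the sign from $\sin r\beta_k = (-1)^{k-1}$, and in contrast to Lemma \ref{ev} there is no case split on the parity of $r$ inside the main calculation—the parity only appears at the very end in the cotangent pairing and the exponent identification.
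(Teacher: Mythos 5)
Your proposal is correct and follows essentially the same route as the paper: the same telescoping against $\sin(\beta_k/2)$ giving $e_r^{\rm T}w_k=\sin\bigl((r-\tfrac12)\beta_k\bigr)/\sin(\beta_k/2)$, the same evaluation $\sin\bigl((r-\tfrac12)\beta_k\bigr)=(-1)^{k-1}\cos\frac{(2k-1)\pi}{4r}$, and the same complementary-angle symmetry to collapse the trigonometric product to $1$. The only (harmless) differences are bookkeeping: you carry the sign as $(-1)^{r(r-1)/2}$ and check its parity equals $\lfloor r/2\rfloor$ at the end, and you pair cotangents $k\leftrightarrow r+1-k$, whereas the paper counts the even indices $k$ directly and observes that $\{2r-2k+1\}$ is a permutation of $\{2k-1\}$.
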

\begin{proof}
By the formula $2\cos\alpha\sin\beta=\sin(\alpha+\beta)-\sin(\alpha-\beta)$, we obtain
\begin{equation*}
\begin{split}
&(1+2\cos\beta_{k}+\cdots+2\cos(r-1)\beta_{k})\sin\frac{1}{2}\beta_{k}\\
&=\sin(r-\frac{1}{2})\beta_{k}
=\sin(\frac{2k-1}{2}\pi-\frac{2k-1}{4r}\pi) \\
&=\pm\cos\frac{2k-1}{4r}\pi
=\pm\sin(\frac{\pi}{2}-\frac{2k-1}{4r}\pi) \\
&=\pm\sin\frac{2r-2k+1}{4r}\pi.
\end{split}
\end{equation*}
In fact, it is easy to see that the cardinality of the set
\begin{equation*}
\begin{split}
&\{k~|~\sin(\frac{2k-1}{2}\pi-\frac{2k-1}{4r}\pi)=-\cos\frac{2k-1}{4r}\pi,~k=1,\ldots,r\}\\
&=\{k~|~k=1,\ldots,r~\text{and}~k~\text{is~even}\}
\end{split}
\end{equation*}
is equal to $\lfloor\frac{r}{2}\rfloor$.
Note that $\{2r-2k+1~|~k=1,\ldots,r\}=\{1,3,\ldots,2r-1\}$ and $\sin\frac{1}{2}\beta_{k}=\sin\frac{2k-1}{4r}\pi\neq0$. Hence, we obtain
$$\prod\limits_{k=1}^{r}e_{r}^{\rm T}w_{k}=(-1)^{\lfloor\frac{r}{2}\rfloor}\frac{\prod\limits_{k=1}^{r}\sin\frac{2r-2k+1}{4r}\pi}{\prod\limits_{k=1}^{r}\sin\frac{2k-1}{4r}\pi}=(-1)^{\lfloor\frac{r}{2}\rfloor}.$$
\end{proof}

\begin{proposition}\label{det1}
Suppose that $n$ is odd. Then the determinant of $W(B_{2})$ is $$\det W(B_{2})=1.$$
\end{proposition}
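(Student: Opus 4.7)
The plan is to mirror the structure of Proposition \ref{det} by applying Lemma \ref{wb} to $M = B_2$, using the eigenvectors $w_k$ of $B_2^{\rm T}$ furnished by Proposition \ref{ee1} and the product formula from Lemma \ref{ev1}. The first preliminary point is that $B_2$ is diagonalizable: since $\beta_k = \frac{2k-1}{2r}\pi \in (0,\pi)$ for $k=1,\ldots,r$ and cosine is strictly decreasing on $(0,\pi)$, the eigenvalues $\mu_k = 2\cos\beta_k$ are pairwise distinct, so $B_2$ admits a basis of eigenvectors. This also justifies applying the ``moreover'' clause of Lemma \ref{wb}.

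Next I would evaluate $\det[w_1\ w_2\ \cdots\ w_r]$. The matrix has the all-ones row at the bottom and $2\cos(r-1)\beta_k$ at the top, which is exactly the matrix of Lemma \ref{cc} with $\theta_k = \beta_k$ but with the row order reversed. Reversing $r$ rows contributes a sign $(-1)^{\lfloor r/2 \rfloor}$, so
\begin{equation*}
\det[w_1\ w_2\ \cdots\ w_r] = (-1)^{\lfloor r/2 \rfloor}\prod_{1 \le j < i \le r}(2\cos\beta_i - 2\cos\beta_j) = (-1)^{\lfloor r/2 \rfloor}\prod_{1 \le j < i \le r}(\mu_i - \mu_j).
\end{equation*}

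Finally, substituting this into Lemma \ref{wb} together with $\prod_{k=1}^{r} e_r^{\rm T} w_k = (-1)^{\lfloor r/2\rfloor}$ from Lemma \ref{ev1}, the two Vandermonde-type products (which agree after reindexing $k \leftrightarrow j$) cancel, as do the two sign factors $(-1)^{\lfloor r/2\rfloor}$, yielding $\det W(B_2) = 1$.

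There is no serious obstacle here; the computation is essentially the odd-$n$ analogue of Proposition \ref{det}, with the Vandermonde structure of the $w_k$ being even cleaner than that of the $v_k$ in the even case (no need for an initial row-addition reduction, since the entries are already of the form $2\cos(r-i)\beta_k$). The only thing to watch is sign bookkeeping: one must verify that the row-reversal sign $(-1)^{\lfloor r/2\rfloor}$ matches the $(-1)^{\lfloor r/2\rfloor}$ arising from Lemma \ref{ev1} so that the two factors cancel exactly, leaving $\det W(B_2)=1$.
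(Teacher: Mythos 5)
Your proposal is correct and follows essentially the same route as the paper: compute $\det[w_1\ \cdots\ w_r]$ via the row-reversal sign $(-1)^{\lfloor r/2\rfloor}$ and Lemma \ref{cc}, then combine with Lemma \ref{ev1} in the formula of Lemma \ref{wb} so the Vandermonde products and signs cancel. The only difference is that you explicitly note the distinctness of the $\mu_k$ to justify diagonalizability, a detail the paper leaves implicit.
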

\begin{proof}
Firstly, let us calculate the determinant of the matrix $\left[w_{1}\quad w_{2}\quad \cdots\quad w_{r}\right]$,
where each $w_{k}$ is the same as that in Proposition \ref{ee1}.
\begin{equation*}
\begin{split}
&\det\begin{bmatrix}
w_{1} & w_{2} & \cdots & w_{r}
\end{bmatrix} \\
=&\begin{vmatrix}
2\cos(r-1)\beta_{1}&2\cos(r-1)\beta_{2}& \cdots &2\cos(r-1)\beta_{r}\\
2\cos(r-2)\beta_{1}&2\cos(r-2)\beta_{2}& \cdots &2\cos(r-2)\beta_{r}\\
\vdots & \vdots & & \vdots \\
2\cos\beta_{1}&2\cos\beta_{2}& \cdots &2\cos\beta_{r}\\
1 & 1 & \cdots & 1
\end{vmatrix} \\
=&(-1)^{\lfloor\frac{r}{2}\rfloor}\begin{vmatrix}1 & 1 &\cdots& 1\\
2\cos\beta_{1}&2\cos\beta_{2}& \cdots &2\cos\beta_{r}\\
\vdots & \vdots & & \vdots \\
2\cos(r-2)\beta_{1}&2\cos(r-2)\beta_{2}& \cdots &2\cos(r-2)\beta_{r}\\
2\cos(r-1)\beta_{1}&2\cos(r-1)\beta_{2}& \cdots &2\cos(r-1)\beta_{r}
\end{vmatrix}\\
=&(-1)^{\lfloor\frac{r}{2}\rfloor}\prod\limits_{1\leq j<i\leq r}(2\cos\beta_{i}-2\cos\beta_{j})\\
=&(-1)^{\lfloor\frac{r}{2}\rfloor}\prod\limits_{1\leq j<i\leq r}(\mu_{i}-\mu_{j}).
\end{split}
\end{equation*}
Using Lemma \ref{wb} and Lemma \ref{ev1}, we obtain $\det W(B_{2})=1$.
\end{proof}

\textbf{\em{Proof of Theorem \ref{mainthm}:}}
\bp When $n$ is even, by Lemma \ref{xy}, we have $$\det\overline{W(A_n)}=\det W(B_1)=1.$$ Thus, by the definition of invariant factors and Lemma \ref{ds}, the Smith normal form of $W(A_n)$ is $${\rm diag}(\underbrace{1,\ldots,1}_{r=\lceil\frac{n}{2}\rceil},0,\ldots,0).$$
The proof is similar when $n$ is odd. Therefore, we complete the proof.
\ep

At the end of the paper, let us provide a short and direct proof of $\det~\overline{W(A_n)}=1$, which is suggested by the anonymous referee.

For each $1\leq i, j \leq n$,
let $w_{i,j}$ be the $(i,j)$-entry of $W(A_n)$, which counts the number of the walks in $A_n$ of length $j-1$ starting from the vertex $i$. Clearly, we have $w_{i,1}=1$ for any $1\leq i\leq n$.

For $1\leq i\leq n$ and $2\leq j\leq n$, set
$$W_{i,j}=\{(x_1,x_2,\ldots,x_{j-1})|x_k=\pm1~\text{for}~1\leq k \leq j-1,~\text{and}~1\leq i+\sum\limits_{k=1}^{s-1}x_k\leq n~\text{for}~2 \leq s\leq j\}.$$
If we regard each walk $\rho$ in $A_n$ of length $j-1$ as a sequence $(x_1,x_2,\ldots,x_{j-1})$ consisting of $\pm 1$, and $x_k=1$ (resp. $x_k=-1$) means that the $k$-th step of the walk $\rho$ points to the right (resp. left), then $W_{i,j}$ denotes
the set of the walks in $A_n$ of length $j-1$ starting from the vertex $i$. For example, for $(-1,-1,1)\in W_{3,4}$, it denotes the walk in $A_n~(n\geq4)$ starting from the vertex $3$, which firstly proceeds two steps to the left and then one step to the right. Thus, $|W_{i,j}|=w_{i,j}$ for any $1\leq i\leq n$ and $2\leq j\leq n$.

\begin{lemma}\label{wij}
For any $2\leq i\leq \lceil \frac{n}{2}\rceil$, we have

$(1)$ $w_{i,j}=w_{i-1,j}$ for $1\leq j\leq i-1$; and

$(2)$ $w_{i,i}=w_{i-1,i}+1$.
\end{lemma}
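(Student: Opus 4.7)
The plan is to argue purely combinatorially via the sets $W_{i,j}$ of $\pm 1$-sequences introduced just before the lemma. The key geometric observation is that the hypothesis $i \leq \lceil \frac{n}{2}\rceil$ places both starting vertices $i$ and $i-1$ far from the right endpoint of $A_n$, so no walk of length $\leq i-1$ from either vertex can reach vertex $n+1$. Consequently the only obstruction to a sign sequence being a valid walk is the left-boundary constraint ``position $\geq 1$''.

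For (1), the case $j=1$ is immediate since $w_{i,1}=w_{i-1,1}=1$. For $2 \leq j \leq i-1$ every partial sum $\sum_{k=1}^{s-1}x_k$ with $s \leq j$ has absolute value at most $j-1 \leq i-2$, so a walk starting at $i$ stays in $[2,2i-2]$ and a walk starting at $i-1$ stays in $[1,2i-3]$. The hypothesis $i \leq \lceil \frac{n}{2}\rceil$ gives $2i-2 \leq n-1$ for $n$ of either parity, so both intervals sit inside $[1,n]$; no constraint is active, and every sequence of $\{\pm 1\}^{j-1}$ belongs to both $W_{i,j}$ and $W_{i-1,j}$. Hence $w_{i,j} = 2^{j-1} = w_{i-1,j}$.

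For (2), I take $j=i$. Using $2i-1 \leq n$ (again an immediate consequence of $i \leq \lceil \frac{n}{2}\rceil$ in both parities), a walk of length $i-1$ from $i$ stays in $[1,2i-1]\subseteq [1,n]$, so every one of the $2^{i-1}$ sign sequences is valid and $w_{i,i}=2^{i-1}$. Starting from $i-1$ the upper bound remains safe, but the lower bound $i-1+\sum_{k=1}^{s-1}x_k \geq 1$ can fail. However, since the walker moves by $\pm 1$ at each step, reaching vertex $0$ from $i-1$ within $i-1$ steps is possible only at the very last step and only when $x_1=x_2=\cdots=x_{i-1}=-1$. This single forbidden sequence yields $w_{i-1,i}=2^{i-1}-1$, whence $w_{i,i}=w_{i-1,i}+1$.

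The only steps that need any care are the numerical inequalities $2i-2 \leq n-1$ and $2i-1 \leq n$ under $i \leq \lceil \frac{n}{2}\rceil$ (verified separately for $n$ even and $n$ odd), and the brief uniqueness argument that the all-$(-1)$ walk is the only invalid walk from $i-1$. Neither point is deep, so I expect this lemma to be a combinatorial warm-up rather than the main obstacle in the paper.
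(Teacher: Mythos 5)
Your proof is correct and follows essentially the same route as the paper: showing that for the relevant lengths every $\pm1$-sequence is a valid walk (so $w_{i,j}=2^{j-1}$ and $w_{i,i}=2^{i-1}$), and that from vertex $i-1$ only the all-$(-1)$ sequence of length $i-1$ fails, giving $w_{i-1,i}=2^{i-1}-1$. The boundary inequalities you verify ($2i-2\leq n-1$, $2i-1\leq n$) correspond to the paper's condition $j\leq\min\{i,n-i+1\}$, so there is no substantive difference.
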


\bp For any $2\leq i\leq \lceil \frac{n}{2}\rceil$, clearly, $w_{i,1}=w_{i-1,1}=1$.
If $2\leq j\leq{\rm min}\{i,n-i+1\}$, then for any sequence $(x_1,x_2,\ldots,x_{j-1})$~with each component equal to $\pm1$, we have
$$1\leq i+(j-1)(-1)\leq i+\sum\limits_{i=1}^{s-1}x_k\leq i+(j-1)\leq n$$
for $2\leq s\leq j$. Thus,
$$W_{i,j}=\{(x_1,x_2,\ldots,x_{j-1})~|~x_k=\pm1~\text{for}~1\leq k \leq j-1\}.$$
Hence, $$w_{i,j}=|W_{i,j}|=2^{j-1}.$$ In particular, if $2\leq i\leq\lceil\frac{n}{2}\rceil$, then $i\leq n-i+1$. Thus $w_{i,j}=2^{j-1}=w_{i-1,j}$ for $2\leq j\leq i-1$ and $w_{i,i}=2^{i-1}$.

Note that $i-1+\sum\limits_{k=1}^{i-1}(-1)=0<1$, thus $(\underbrace{-1,\ldots,-1}_{i-1})\notin W_{i-1,i}$. However, for any sequence $(x_1,x_2,\ldots,x_{i-1})$~with each component equal to $\pm1$, if it is not the sequence $(\underbrace{-1,\ldots,-1}_{i-1})$, then we have $$1= i-1+(i-2)(-1)\leq i-1+\sum\limits_{k=1}^{s-1}x_k\leq i-1+i-1\leq n-1$$
for $2\leq s\leq i$.
Namely, $(x_1,x_2,\ldots,x_{i-1})\in W_{i-1,i}$. Hence, $w_{i-1,i}=2^{i-1}-1$. Therefore, we complete the proof.
\ep

Using Lemma \ref{wij} and taking the elementary row operations $r_i-r_{i-1}$ for $i=n,\ldots,2$, we get that
$$\det~\overline{W(A_n)}=\begin{vmatrix}
1 & \ast   & \ast  & \cdots & \ast & \ast \\
0 & 1 & \ast   & \cdots & \ast & \ast \\
0 & 0 & 1 & \cdots & \ast & \ast \\
\vdots & \vdots & \vdots &  & \vdots & \vdots \\
0 & 0 & 0 & \cdots & 1 & \ast \\
0 & 0 & 0 & \cdots & 0 & 1
\end{vmatrix}=1.$$

\section*{Acknowledgments}
The authors are grateful to the anonymous referee for the careful reading, valuable comments and suggestions.
This work was partially supported by the National Natural Science Foundation of China (Grant No. 12271257).

\end{document}